\theoremstyle{plain}%
 \newtheorem{theorem}{Theorem}
 \newtheorem{corollary}{Corollary}
 \newtheorem{proposition}{Proposition}
\theoremstyle{remark}
\newtheorem{remark}{Remark}
\theoremstyle{definition}
\newtheorem{example}{Example}
\newcommand{\dotDelta}{{\vphantom{\Delta}\mathpalette\d@tD@lta\relax}}
\newcommand{\d@tD@lta}[2]{%
 \ooalign{\hidewidth$\m@th#1\mkern-1mu\cdot$\hidewidth\cr$\m@th#1\Delta$\cr}%
}
\begin{document}

\begin{center}
{\Large Generalizations and variants of Knuth's old sum} 
\end{center}

\begin{center}
{\textsc{Arjun K. Rathie} \ \ \ \textsc{John M.\ Campbell} } 

 \ 

\end{center}

\begin{abstract}
 We extend the Reed Dawson identity for Knuth's old sum with a complex parameter, and we offer two separate hypergeometric series-based proofs of 
 this generalization, 
 and we apply this generalization to introduce binomial-harmonic sum identities. 
 We also provide another ${}_{2}F_{1}(2)$-generalization 
 of the Reed Dawson identity involving a free parameter. 
 We then apply Fourier--Legendre 
 theory to obtain an identity involving odd harmonic numbers that resembles the formula for Knuth's old sum, and the modified Abel lemma 
 on summation by parts is also applied. 
\end{abstract}

\section{Introduction}
 The sum given in the following identity is often referred to as 
 as Knuth's old sum \cite{Prodinger1994}: 
\begin{equation}\label{Knuthold}
 \sum_{k=0}^{n} \left( -\frac{1}{2} \right)^{k} \binom{n}{k} \binom{2k}{k} 
 = \begin{cases} 
 \frac{1}{2^n} \binom{n}{ n/2 } & \text{if $n$ is even}, \\ 
 0 & \text{if $n$ is odd}. 
 \end{cases} 
\end{equation}
 The above identity is often referred to as the \emph{Reed Dawson identity}.
 As in \cite{RathieKimParis2020}, we record that Reed Dawson introduced the above identity to Riordan, 
 with reference to the classic text \cite[p.\ 71]{Riordan1968}. 
 We also refer to the survey paper \cite{Prodinger1994} for a number 
 of different proofs of \eqref{Knuthold}. 
 Generalizations of \eqref{Knuthold} for the finite sums 
 $$ \sum_{k=0}^{2n} \left( -\frac{1}{2} \right)^{k} \binom{2n + \ell}{k + \ell} \binom{2k}{k} $$ and 
 $$ \sum_{k=0}^{2n+1} \left( -\frac{1}{2} \right)^{k} \binom{2n + 1 + \ell}{k + \ell} \binom{2k}{k} $$ 
 for $\ell \in \mathbb{N}_{0}$ are proved in \cite{RathieKimParis2020} 
 and natural extensions are established in \cite{KimMilovanovicParisRathie2021}. 
 This article is mainly inspired by the generalizations indicated below of Knuth's old sum identity. 

\begin{proposition}\label{mainproposition}
 For suitably bounded complex $\ell$, the following identity holds true: 
\begin{equation}\label{mainresult}
 \sum_{k=0}^{n} \left( -\frac{1}{2} \right)^{k} \binom{n + \ell}{k + \ell} \binom{2k + 2 \ell}{k} 
 = \begin{cases} 
 \frac{\binom{n + \ell}{\frac{n}{2}}}{2^n} & \text{if $n$ is even}, \\ 
 0 & \text{if $n$ is odd}. 
 \end{cases} 
\end{equation}
\end{proposition}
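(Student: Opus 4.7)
The plan is to recognize the sum as a terminating Gauss hypergeometric series at argument $2$. Computing the ratio of consecutive summands one finds
$$
\sum_{k=0}^{n}\left(-\frac{1}{2}\right)^{k}\binom{n+\ell}{k+\ell}\binom{2k+2\ell}{k}
=\binom{n+\ell}{\ell}\,{}_{2}F_{1}\!\left[\begin{array}{c}-n,\ \ell+\tfrac{1}{2}\\ 2\ell+1\end{array};\,2\right].
$$
The denominator parameter $2\ell+1$ being exactly twice the numerator parameter $\ell+\tfrac{1}{2}$ is the hallmark signalling that a classical quadratic transformation applies.

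The next step is to invoke the quadratic transformation
$$
{}_{2}F_{1}\!\left[\begin{array}{c}a,\ b\\ 2b\end{array};\,z\right]
=(1-z)^{-a/2}\,{}_{2}F_{1}\!\left[\begin{array}{c}\tfrac{a}{2},\ b-\tfrac{a}{2}\\ b+\tfrac{1}{2}\end{array};\,\tfrac{z^{2}}{4(z-1)}\right]
$$
at $z=2$, which maps the new argument to $1$. With $a=-n$ and $b=\ell+\tfrac{1}{2}$, this gives
$$
{}_{2}F_{1}\!\left[\begin{array}{c}-n,\ \ell+\tfrac{1}{2}\\ 2\ell+1\end{array};\,2\right]
=(-1)^{n/2}\,{}_{2}F_{1}\!\left[\begin{array}{c}-\tfrac{n}{2},\ \tfrac{n}{2}+\ell+\tfrac{1}{2}\\ \ell+1\end{array};\,1\right],
$$
and Gauss's summation theorem evaluates the right-hand ${}_{2}F_{1}$ as $\Gamma(\ell+1)\Gamma(1/2)/\bigl[\Gamma(\ell+1+n/2)\Gamma(1/2-n/2)\bigr]$, using that $c-a-b=\tfrac{1}{2}>0$ so no convergence issue arises.

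The final step is a parity dichotomy. When $n$ is odd, the argument $\tfrac{1}{2}-\tfrac{n}{2}$ is a non-positive integer, so the reciprocal gamma vanishes and the entire expression is $0$. When $n=2m$ is even, the recursion $\Gamma(\tfrac{1}{2}-m)=(-1)^{m}\sqrt{\pi}\,4^{m}m!/(2m)!$ combines with the prefactor $(-1)^{m}$ and the leading $\binom{n+\ell}{\ell}$ to collapse the product to $\binom{n+\ell}{n/2}/2^{n}$, thus recovering \eqref{mainresult}.

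The main obstacle I anticipate is justifying the use of the quadratic transformation at $z=2$, which lies outside the usual disc of convergence for the Gauss series. The resolution is that the upper parameter $-n$ forces the series to terminate, so both sides of the transformation are polynomials in $z$ (equivalently, rational functions of $\ell$ for fixed $n$), and the identity extends from $|z|<1$ to $z=2$ by analytic continuation; the apparent awkwardness of the factor $(-1)^{n/2}$ when $n$ is odd is immaterial since the adjacent reciprocal gamma vanishes.
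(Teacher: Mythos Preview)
Your argument coincides with the paper's first proof up to and including the identification
\[
S=\binom{n+\ell}{\ell}\,{}_{2}F_{1}\!\left[\begin{matrix}-n,\ \ell+\tfrac12\\ 2\ell+1\end{matrix};\,2\right].
\]
At that point the paper simply quotes the closed forms for ${}_{2}F_{1}\!\left[-n,a;2a;2\right]$ from Rainville (consequences of Kummer's second formula), whereas you re-derive them by the quadratic transformation ${}_{2}F_{1}[a,b;2b;z]=(1-z)^{-a/2}{}_{2}F_{1}[\tfrac{a}{2},b-\tfrac{a}{2};b+\tfrac12;\tfrac{z^{2}}{4(z-1)}]$ followed by Gauss's theorem at argument~$1$. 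That is a perfectly good and slightly more self-contained route. One caveat: your claim that ``both sides of the transformation are polynomials in $z$'' is correct only for even $n$; for odd $n$ neither $(1-z)^{n/2}$ nor the transformed ${}_{2}F_{1}$ terminates, so the analytic-continuation step as written is not rigorous. You flag this yourself, and the conclusion (zero) is right, but to make the odd case airtight you should either invoke the odd-$n$ evaluation directly as the paper does, or argue via continuity of ${}_{2}F_{1}$ at $w=1$ when $\mathrm{Re}(c-a-b)=\tfrac12>0$, or use the paper's second proof (reversal $k\mapsto n-k$ plus Gauss's second theorem), which avoids argument~$2$ altogether.
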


\begin{proposition}\label{202204161258AM1A}
 For suitably bounded complex $\ell$, the following identity holds true: 
\begin{equation}
 \sum_{k=0}^{n} \left( -\frac{1}{2} \right)^{k} 
 \frac{ \binom{n}{k} \binom{2k+2\ell}{k} }{\binom{k+\ell}{k}}
 = \begin{cases} 
 \frac{2^{-n} \binom{ n}{\frac{n}{2}}}{\binom{\frac{n}{2} + \ell}{\frac{n}{2}}} & \text{if $n$ is even}, \\ 
 0 & \text{if $n$ is odd}. 
 \end{cases} 
\end{equation}
\end{proposition}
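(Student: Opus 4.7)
The plan is to deduce Proposition~\ref{202204161258AM1A} as a direct corollary of Proposition~\ref{mainproposition}, by observing that the summand of the former differs from the summand of the latter only by a factor that is independent of the summation index $k$. Concretely, expanding both quotients in factorials yields
$$
\frac{\binom{n}{k}}{\binom{k+\ell}{k}} = \frac{n!\,\ell!}{(k+\ell)!\,(n-k)!} = \frac{\binom{n+\ell}{k+\ell}}{\binom{n+\ell}{\ell}},
$$
so that the left-hand side of Proposition~\ref{202204161258AM1A} is precisely $\binom{n+\ell}{\ell}^{-1}$ times the left-hand side of Proposition~\ref{mainproposition}.

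Having recorded this rescaling, I would pull the $k$-independent factor $\binom{n+\ell}{\ell}^{-1}$ outside the sum and invoke Proposition~\ref{mainproposition}. The odd case of Proposition~\ref{202204161258AM1A} is then immediate, because the right-hand side of Proposition~\ref{mainproposition} already vanishes when $n$ is odd. For even $n$, I would be left with the expression $\binom{n+\ell}{\ell}^{-1}\cdot 2^{-n}\binom{n+\ell}{n/2}$, and it would remain to verify that this coincides with the claimed closed form $2^{-n}\binom{n}{n/2}\big/\binom{n/2+\ell}{n/2}$. This is another routine factorial comparison: both expressions reduce to $2^{-n}\,n!\,\ell!\big/\bigl((n/2)!\,(n/2+\ell)!\bigr)$.

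Since Proposition~\ref{mainproposition} is available, I do not anticipate any serious obstacle. The only delicate point is ensuring that the qualification ``suitably bounded complex $\ell$'' excludes those values at which $\binom{n+\ell}{\ell}$ or $\binom{k+\ell}{k}$ become singular or vanish; because Proposition~\ref{mainproposition} already carries the same qualification, any such constraint transfers verbatim to the present statement. The entire argument is thus essentially bookkeeping on top of the previously established identity.
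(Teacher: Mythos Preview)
Your argument is correct. The factorial identity
\[
\frac{\binom{n}{k}}{\binom{k+\ell}{k}}=\frac{n!\,\ell!}{(n-k)!\,(k+\ell)!}=\frac{\binom{n+\ell}{k+\ell}}{\binom{n+\ell}{\ell}}
\]
does exactly what you claim, the odd case is immediate, and the even-case verification that both sides equal $2^{-n}\,n!\,\ell!/\bigl((n/2)!\,(n/2+\ell)!\bigr)$ is straightforward.

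Your route differs from the paper's. The paper does not invoke Proposition~\ref{mainproposition}; instead it converts the summand of Proposition~\ref{202204161258AM1A} directly into Pochhammer symbols, obtains the terminating series
\[
{}_{2}F_{1}\!\left[\begin{matrix}-n,\ \ell+\tfrac12\\ 2\ell+1\end{matrix}\;\Big|\;2\right],
\]
and then appeals to the same Kummer-type evaluations used in the first proof of Proposition~\ref{mainproposition}. In effect the paper treats the two propositions as parallel consequences of the same ${}_2F_1(2)$ evaluation, whereas you treat the second as a rescaled corollary of the first. Your approach is arguably cleaner: it makes explicit that the two sums are literally the same hypergeometric object up to the $k$-free factor $\binom{n+\ell}{\ell}$, so no second appeal to special-function identities is needed. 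The paper's approach, on the other hand, keeps Proposition~\ref{202204161258AM1A} logically independent of Proposition~\ref{mainproposition}, which may be preferable if one wants each result to stand on its own.
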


 Although computer algebra systems such as Mathematica and Maple are able to evaluate the left-hand sides of the above 
 Propositions, we apply the above Propositions 
 to obtain results that cannot be handled by current CAS software. 

\subsection{Summary of main results}
 We offer two separate hypergeometric series-based proofs of 
 the generalization of the Reed Dawson identity indicated in Proposition \ref{mainproposition}: 
 One such proof employs hypergeometric identities obtained from Kummer's second formula 
 \cite[\S70]{Rainville1960}, and the latter such proof relies on a reindexing argument 
 together with the hypergeometric identity known as Gauss's second theorem. 

 Apart from the Reed Dawson-like harmonic sum identity shown in Example \ref{3Hk2H2k} that we may obtain in a straightforward way by applying 
 $\frac{d}{d\ell} \cdot \big|_{\ell = 0}$ to both sides of \eqref{mainresult}, proving the following identity 
 is much more difficult: 
\begin{equation*}
 \sum _{k=0}^m \frac{(-2)^k \binom{m}{k} H_k}{k+1} 
 = \begin{cases} 
 -\frac{2 }{m+1} O_{ \frac{m+1}{2}} & \text{if $m$ is odd,} \\ 
 0 & \text{if $m$ is even.} 
 \end{cases} 
\end{equation*}
 We are letting $O_{r} = 1 + \frac{1}{3} + \cdots + \frac{1}{2r-1}$ 
 denote the $r^{\text{th}}$ odd harmonic number. 
 Neither Maple 13 nor Maple 2020 is able to evaluate these harmonic sums. 

We apply, as in Section \ref{sectionFL} below, 
 building on recent results from \cite{CampbellLevrieNimbran2021}, 
 Fourier--Legendre (FL) theory to prove the binomial-harmonic sum identity 
\begin{equation}\label{oddKnuth}
 \sum_{k = 0}^{n} \left( -\frac{1}{4} \right)^{k} \binom{n}{k} \binom{2k}{k} O_{k} 
 = -\left( \frac{1}{4} \right)^{n} \binom{2n}{n} O_{n}, 
\end{equation}
 which closely resembles the formula for Knuth's old sum, as shown in \eqref{Knuthold}. 
 Again, neither Maple 13 nor Maple 2020 is able to correctly evaluate the above harmonic sum. 

 Finally, in Section \ref{sectionAbel}, 
 we conclude by briefly considering the application of the modified Abel lemma on summation by parts to 
 our results. 

\section{Hypergeometric proofs}
 We begin by introducing two separate proofs of   \eqref{mainresult} using classical hypergeometric infinite series. 

 \ 

\noindent \emph{First Proof of Proposition \ref{mainproposition}:} Let $S$ denote the left-hand side of 
 \eqref{mainresult}. We proceed to rewrite the binomial coefficients in the summand in 
 \eqref{mainresult} using the Pochhammer symbol. This gives us that
 $$ S = \frac{\Gamma\left( n + \ell + 1 \right)}{\Gamma\left( n + 
 1 \right)\Gamma\left( \ell + 1 \right)} \sum_{k = 0}^{n} \frac{ \left( -n \right)_{k} 
 \left( \ell + \frac{1}{2} \right)_{k} 2^{k} }{ k! \left( 2 \ell + 1 \right)_{k}}. $$
 Since Pochhammer symbols of the form $ \left( -n \right)_{k} $ vanish for $k > n$, 
 we find that: 
\begin{equation*}
 S = \frac{\Gamma\left( n + \ell + 1 \right)}{\Gamma\left( n + 
 1 \right)\Gamma\left( \ell + 1 \right)} \, {}_{2}F_{1}\!\!\left[ 
 \begin{matrix} 
 -n, \ell + \frac{1}{2} \vspace{1mm} \\ 
 2 \ell + 1
 \end{matrix} \ \Bigg| \ 2 \right]. 
\end{equation*}
 We now observe that the ${}_{2}F_{1}(2)$-function given above may be evaluated 
 via known hypergeometric identities 
 \cite[\S70]{Rainville1960}: For odd/even $n$, respectively, we have that
\begin{equation*}
 {}_{2}F_{1}\!\!\left[ 
 \begin{matrix} 
 -2 n, a \vspace{1mm} \\ 
 2 a 
 \end{matrix} \ \Bigg| \ 2 \right] = \frac{ \left( \frac{1}{2} \right)_{n} }{ \left( a + \frac{1}{2} \right)_{n} } 
\end{equation*}
 and that 
\begin{equation*}
 {}_{2}F_{1}\!\!\left[ 
 \begin{matrix} 
 -2 n, a \vspace{1mm} \\ 
 2 a + 1
 \end{matrix} \ \Bigg| \ 2 \right] = 0.
\end{equation*}
 This easily allows us to obtain the right-hand side of \eqref{mainresult}. \qed

 \ 

 On the other hand, Proposition \ref{mainproposition} may also be established by employing the classical hypergeometric identity known as \emph{Gauss's 
 second summation theorem} \cite{Rainville1960}: 
\begin{equation}\label{Gausssecond}
 {}_{2}F_{1}\!\!\left[ 
 \begin{matrix} 
 a, b \vspace{1mm} \\ 
 \frac{1}{2}(a + b + 1)
 \end{matrix} \ \Bigg| \ \frac{1}{2} \right] 
 = \frac{\Gamma\left( \frac{1}{2} \right) 
 \Gamma\left( \frac{1}{2} a + \frac{1}{2} b + 
 \frac{1}{2} \right) }{\Gamma\left( \frac{1}{2} a + 
 \frac{1}{2} \right) \Gamma\left( \frac{1}{2} b + \frac{1}{2} \right)}. 
\end{equation}

 \ 

\noindent \emph{Second Proof of Proposition \ref{mainproposition}:} We begin by letting $S$ denote the left-hand side of 
 \eqref{mainresult}. Let us apply a reindexing argument by reversing the indexing for 
 the finite sum on the left-hand side of \eqref{mainresult}, by replacing $k$ with $n - k$ in the summand under consideration. 
 So, we find that 
 $$ S = \sum_{k=0}^{n} (-1)^{n-k} 2^{-n + k} 
 \binom{n + \ell}{n - k + \ell} \binom{2n - 2 k + 2 \ell}{n - k}. $$
 Now, rewriting the binomial coefficients in the above summand using the Pochhammer symbol, we obtain that 
 $S$ may be written as: 
 $$ \frac{ (-1)^{n} 2^{n + 2 \ell} \Gamma\left( 
 n + \ell + \frac{1}{2} \right) \Gamma\left( n + \ell + 1 \right) }{ \sqrt{\pi} 
 \Gamma\left( n + 1 \right) \Gamma\left( n + 2 \ell + 1\right) } 
 \sum_{k=0}^{n} \frac{ \left( -n \right)_{k} 
 \left( -n - 2 \ell \right)_{k} 2^{-k} }{ \left( \frac{1}{2} - n - \ell \right)_{k} k!}. $$
 Since $ \left( -n \right)_{k} $ vanishes for $k > n$, we may rewrite the above expression as follows: 
 $$ \frac{ (-1)^{n} 2^{n + 2 \ell} \Gamma\left( 
 n + \ell + \frac{1}{2} \right) \Gamma\left( n + \ell + 1 \right) }{ \sqrt{\pi} 
 \Gamma\left( n + 1 \right) \Gamma\left( n + 2 \ell + 1\right) } 
 {}_{2}F_{1}\!\!\left[ 
 \begin{matrix} 
 -n, -n - 2 \ell \vspace{1mm} \\ 
 \frac{1}{2} - n - \ell 
 \end{matrix} \ \Bigg| \ \frac{1}{2} \right]. $$
 So, we find that the above ${}_{2}F_{1}\left( \frac{1}{2} \right)$-series may be evaluated via \eqref{Gausssecond}; 
 after some simplification, we obtain that 
 $$ S = 
 \frac{2^{n + 2 \ell} \Gamma\left( \frac{1}{2} - n \right) 
 \Gamma\left( n + \ell + 1 \right) }{ \Gamma\left( n + 1 \right) 
 \Gamma\left( n + 2 \ell + 1 \right) \Gamma\left( -\frac{1}{2} n + \frac{1}{2} \right) 
 \Gamma\left( \frac{1}{2} n - \ell + \frac{1}{2} \right)}, $$
 and this is easily seen to be equivalent to the desired result. \qed 

 \ 

\noindent \emph{Proof of Proposition \ref{202204161258AM1A}:} As before, we 
 convert binomial coefficients into Pochhammer symbols, so that the sum in 
 Proposition \ref{202204161258AM1A} may be written as 
 $$ {}_{2}F_{1}\!\!\left[ 
 \begin{matrix} 
 -n, \ell + \frac{1}{2} \vspace{1mm} \\ 
 2 \ell + 1 
 \end{matrix} \ \Bigg| \ 2 \right]. $$ 
 Using known results depending on the parity of $n$, the desired result easily follows. \qed 

\begin{remark}
 We also may prove the nonvanishing case of Proposition \ref{mainproposition} using the Wilf--Zeilberger (WZ) method 
 \cite{PetkovsekWilfZeilberger1996}; we assume familiarity with the WZ method. For the even case, we set 
\begin{equation}\label{mainFnk}
 F(n, k) = {\frac {{2\,n+\ell\choose k+\ell}{2\,k+2\,\ell\choose k}{2}^{2\,n}}{{2\,n+\ell
\choose n}} \left( -{\frac{1}{2}} \right) ^{k}}, 
\end{equation}
 and the Maple command {\tt WZMethod} may be used to determine the following WZ proof certificate: 
 $$ -{\frac {k \left( k+2\,\ell \right) }{ \left( -2\,n+k-1 \right) \left( -
2\,n+k-2 \right) }}. $$ 
 Similarly, Zeilberger's algorithm \cite[\S6]{PetkovsekWilfZeilberger1996} may be applied to the vanishing case for 
 Proposition \ref{mainproposition}. As for the latter Proposition, a WZ proof certificate for the nonvanishing case 
 is recorded below: 
 $$ -\frac{k (k+2 \ell)}{(k-2 n-2) (k-2 n-1)}. $$
 Again, Zeilberger's algorithm applies to the vanishing case. 

\end{remark}

\section{Binomial-harmonic summations}
 As in our above generalization of the Reed Dawson identity, 
 we are letting $\ell$ denote a suitably bounded complex parameter. This allows us to obtain interesting results on binomial-harmonic sums, by 
 applying differential operators to both sides of \eqref{mainresult}, and by making use of the series expansion \cite[\S9]{Rainville1960} 
\begin{equation}\label{psidef}
 \psi(z) = \frac{d}{dz}\ln\Gamma(z)=\frac{\Gamma'(z)}{\Gamma(z)} =-\gamma+\sum_{n=0}^{\infty}\frac{z-1}{(n+1)(n+z)} 
\end{equation}
 for the digamma function, where
 the Euler--Mascheroni constant is such that $ \gamma = \lim_{n\to\infty}\big(H_n-\ln n\big)$. 

\begin{example}\label{3Hk2H2k}
 By applying $\frac{d}{d\ell} \cdot \big|_{\ell = 0}$, the above approach gives us that 
\begin{equation}\label{equation3Hk}
 \sum _{k=0}^{2 n} \left(-\frac{1}{2}\right)^k \binom{2 k}{k} \binom{2 n}{k} \left(3 H_k-2 H_{2 k}\right)
 = \left( \frac{1}{4} \right)^{n} \binom{2n}{n} H_{n} 
\end{equation}
 for all $n \in \mathbb{N}_{0}$, letting $H_{n} = 1 + \frac{1}{2} + \cdots + \frac{1}{n}$ denote 
 the $n^{\text{th}}$ harmonic number. 
\end{example}

 Notice the close resemblance of the binomial-harmonic identity in Example \ref{3Hk2H2k} to both the identity for Knuth's old sum and the formula highlighted 
 in \eqref{oddKnuth}. By applying $\frac{d}{d\ell} \cdot \big|_{\ell = \frac{1}{2}}$ to both sides of \eqref{mainresult}, we may use the resultant identity 
 to obtain another remarkable result, as below, but the derivation/proof of the summation identity shown below is much more intricate compared to 
 \eqref{equation3Hk}, so we find it appropriate to highlight the below result as a Corollary to our generalization of the Reed Dawson identity. It 
   does not seem to be possible to the usual moment/integral formulas for the sequence of 
 harmonic numbers to prove the following Corollary. 

\begin{corollary}\label{mCorollary}
 The identity 
\begin{equation*}
 \sum _{k=0}^m \frac{(-2)^k \binom{m}{k} H_k}{k+1} 
 = \begin{cases} 
 -\frac{2 }{m+1} O_{ \frac{m+1}{2}} & \text{if $m$ is odd} \\ 
 0 & \text{if $m$ is even} 
 \end{cases} 
\end{equation*}
 holds for all $m \in \mathbb{N}_{0}$. 
\end{corollary}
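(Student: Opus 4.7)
The plan is to apply the differential operator $\frac{d}{d\ell}\big|_{\ell = 1/2}$ to both sides of \eqref{mainresult}, written with $n$ replaced by $m$. First, using $\Gamma(k+3/2) = (2k+1)!\sqrt{\pi}/(2^{2k+1}k!)$, a direct computation shows that the summand at $\ell = 1/2$ equals
\[
T_k(1/2) := \left(-\tfrac{1}{2}\right)^k\binom{m+1/2}{k+1/2}\binom{2k+1}{k} = C\,\frac{(-2)^k\binom{m}{k}}{k+1}, \qquad C := \binom{2m+1}{m}\frac{m+1}{4^m}.
\]
Thus the target sum $S_m := \sum_{k=0}^m (-2)^k\binom{m}{k} H_k/(k+1)$ appears, up to the nonzero scalar $C$, as $\sum_k T_k(1/2)\,H_k$, and the $H_k$ factor will be produced by logarithmic differentiation of the summand in \eqref{mainresult}.

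Second, since
\[
\frac{T_k'(\ell)}{T_k(\ell)} = \psi(m+\ell+1) - \psi(k+\ell+1) + 2\psi(2k+2\ell+1) - 2\psi(k+2\ell+1),
\]
I would substitute the classical values $\psi(j+3/2) = -\gamma - 2\ln 2 + 2O_{j+1}$ and $\psi(j+2) = -\gamma + H_{j+1}$ at $\ell = 1/2$, and simplify using $H_{2k+1} = O_{k+1} + \tfrac{1}{2}H_k$. The $O_{k+1}$ terms cancel exactly, yielding the clean expression
\[
\frac{T_k'(1/2)}{T_k(1/2)} = 2O_{m+1} - H_k - \frac{2}{k+1}.
\]
Summing over $k$ and using $T_k'(1/2) = T_k(1/2)\cdot[\text{bracket}]$ gives $\sum_k T_k'(1/2) = 2O_{m+1}\,\mathrm{RHS}(1/2) - C S_m - 2C\Sigma_m$, where $\Sigma_m := \sum_{k=0}^m (-2)^k\binom{m}{k}/(k+1)^2$ is an auxiliary sum.

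The auxiliary sum $\Sigma_m$ is evaluated by the standard absorption $\binom{m}{k}/(k+1) = \binom{m+1}{k+1}/(m+1)$ together with the integral representation $\sum_{j=1}^{N}\binom{N}{j}x^j/j = \int_0^x [(1+t)^N - 1]/t\,dt$; setting $N = m+1$ and $x = -2$, the substitution $u = 1+t$ reduces the integral to $-\int_{-1}^{1}(1 + u + \cdots + u^m)\,du$, which collapses to an odd harmonic number and gives $\Sigma_m = O_{\lceil (m+1)/2\rceil}/(m+1)$. For the right-hand side of \eqref{mainresult}, the odd case is immediate ($\mathrm{RHS} \equiv 0$), while for even $m$ the logarithmic derivative $\mathrm{RHS}'(\ell)/\mathrm{RHS}(\ell) = \psi(m+\ell+1) - \psi(m/2+\ell+1)$ evaluates at $\ell = 1/2$ to $2(O_{m+1} - O_{m/2+1})$.

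Equating both sides and dividing out $C$ then yields $S_m$ in closed form. In the even case one has $\lceil (m+1)/2\rceil = m/2 + 1$, and the $O_{m+1}$ and $O_{m/2+1}$ contributions cancel identically on both sides, forcing $S_m = 0$. In the odd case, $\mathrm{RHS}'(1/2) = 0$ and $\mathrm{RHS}(1/2) = 0$, leaving $0 = -CS_m - 2C\,O_{(m+1)/2}/(m+1)$, which gives $S_m = -\tfrac{2}{m+1}O_{(m+1)/2}$. The main obstacle is engineering the precise telescoping in the logarithmic-derivative step: without invoking $H_{2k+1} = O_{k+1} + H_k/2$ to cancel the $O_{k+1}$ contributions, one cannot isolate $H_k$ cleanly in the summand, and without the subsequent delicate cancellation in the even case one cannot deduce $S_m = 0$. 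This also explains why the usual moment/integral representations for harmonic numbers do not seem to yield the Corollary directly.
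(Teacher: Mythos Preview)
Your proof is correct and starts from the same idea as the paper: apply $\frac{d}{d\ell}\big|_{\ell=1/2}$ to Proposition~\ref{mainproposition}, which produces an equation linking the target sum $S_m$ to the auxiliary sum $\Sigma_m=\sum_k(-2)^k\binom{m}{k}/(k+1)^2$. Where the two arguments diverge is in how $\Sigma_m$ and the odd case are handled. The paper restricts first to $m=2n$, evaluates $\Sigma_{2n}$ by antidifferentiating the closed form of $\sum_k(-2)^k\binom{2n}{k}x^k/(k+1)$ (leading to a ${}_2F_1$ antiderivative and limits at $x=0,1$), deduces $S_{2n}=0$, and only then attacks $m=2n+1$ separately via the absorption $\binom{2n}{k}=\frac{2n-k+1}{2n+1}\binom{2n+1}{k}$ together with a second generating-function/integration argument. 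Your route is more uniform: you compute the logarithmic derivative cleanly using $H_{2k+1}=O_{k+1}+\tfrac12 H_k$ to strip out the $O_{k+1}$ terms, and you evaluate $\Sigma_m$ for \emph{all} $m$ at once by the elementary substitution $u=1+t$ in $\int_0^{-2}[(1+t)^{m+1}-1]/t\,dt=-\int_{-1}^{1}(1+u+\cdots+u^m)\,du$, which collapses directly to an odd harmonic number. This lets both parities drop out of a single equation, avoiding the paper's separate (and longer) treatment of odd $m$.
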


\begin{proof}
 After the application of $\frac{d}{d\ell} \cdot \big|_{\ell = \frac{1}{2}}$ in 
 the above Proposition, and after much simplification and rearrangement using 
 the digamma expansion in \eqref{psidef}, we obtain that 
 $$ -4 \sum _{k=0}^{2 n} \frac{\left(-\frac{1}{2}\right)^k \binom{2 n}{k} 4^k}{(k+1)^2}-2 \sum _{k=0}^{2 n} \frac{\left(-\frac{1}{2}\right)^k \binom{2 n}{k}
 4^k H_{k} }{k + 1} $$
 equals $\frac{2 \left(H_{n+1}-2 H_{2 n+2}\right)}{2 n+1}$ for each member $n$ of $\mathbb{N}_{0}$. 
 So, it remains to evaluate the ${}_{3}F_{2}(2)$-series 
 $$ \sum _{k=0}^{2 n} \frac{\left(-\frac{1}{2}\right)^k \binom{2 n}{k} 4^k}{(k+1)^2}
 = {}_{3}F_{2}\!\!\left[ 
 \begin{matrix} 
 1, 1, -2n \vspace{1mm} \\ 
 2, 2
 \end{matrix} \ \Bigg| \ 2 \right], $$ 
 noting that Mathematica 13, the current version of Mathematica in 2022, 
 is not able to evaluate this ${}_{3}F_{2}(2)$-expression for a free parameter $n$. Since 
 $$ \sum _{k=0}^{2 n} \frac{\left(-\frac{1}{2}\right)^k \binom{2 n}{k} 4^k x^k}{k+1}
 = \frac{2 x (1-2 x)^{2 n}-(1-2 x)^{2 n}+1}{2 (2 n+1) x}, $$ 
 we are led to evaluate the antiderivative of the right-hand side of the above equality, as below: 
\begin{align*}
 & \frac{1}{2 (2 n+1)^2} \Bigg( (1-2 x)^{2 n+1} \, 
 {}_{2}F_{1}\!\!\left[ 
 \begin{matrix} 
 1, 2n + 1 \vspace{1mm} \\ 
 2 n + 2 
 \end{matrix} \ \Bigg| \ 1 - 2 x \right] - \\ 
 & (1-2 x)^{2 n+1}+2 n \ln (x)+\ln (x) \Bigg). 
\end{align*}
 Setting $x \to 1$ and $x \to 0$, this can be used to prove the even case for the Corollary under consideration. 
 So, since we have proved that 
\begin{equation*}
 \sum _{k=0}^{2 n} \frac{(-2)^k \binom{2 n}{k} H_k}{k+1} 
\end{equation*}
 always vanishes, we proceed to make use of the 
 binomial identity 
$$ \frac{(2 n-k+1) \binom{2 n+1}{k}}{2 n+1}=\binom{2 n}{k}, $$
 and we are led to find that 
 $$ \sum _{k=0}^{2 n} \binom{2 n+1}{k} H_k (-2)^k \left(-1+\frac{2}{k+1}+\frac{2 n}{k+1}\right) $$
 always vanishes. 
 So, it remains to evaluate 
 $$ - \sum _{k=0}^{2 n} \binom{2 n+1}{k} H_k (-2)^k. $$
 So, by evaluating 
$$ \sum _{k=0}^{2 n} \binom{1+2 n}{k} \left(-k x^{k-1} \ln (1-x)\right) (-2)^k (-1)$$ 
 as 
$$ -(-2)^{2 n+1} (2 n+1) x^{2 n} \ln (1-x)-2 (2 n+1) (1-2 x)^{2 n} \ln (1-x) $$
 and by expressing the antiderivative of the above evaluation as the product 
 of $-\frac{1}{2 (n+1)}$ and 
\begin{align*}
 & (-2)^{2 n+1} x^{2 n+2} 
 \, {}_{2}F_{1}\!\!\left[ 
 \begin{matrix} 
 1, 2 (n+1) \vspace{1mm} \\ 
 2 n+3
 \end{matrix} \ \Bigg| \ x \right] + \\ 
 & (1-2 x)^{2 (n+1)} 
 \, {}_{2}F_{1}\!\!\left[ 
 \begin{matrix} 
 1, 2 n+2 \vspace{1mm} \\ 
 2 n+3
 \end{matrix} \ \Bigg| \ 2 x-1 \right] + \\
 & 2 (n+1) \left((-2)^{2 n+1} x^{2 n+1}+2 x (1-2
 x)^{2 n}-(1-2 x)^{2 n}\right) \ln (1-x), 
\end{align*}
 and by taking the required limits as $x \to 1$ and $x \to 0$, we find that 
 $$ \sum _{k=0}^{2 n} \frac{(-2)^k \binom{2 n+1}{k} H_k}{k+1}=\frac{\left(4^n-1\right) H_{2 n}}{n+1}+\frac{H_n}{2 (n+1)}+\frac{4^n-1}{(n+1) (2 n+1)} $$
 for all $n \in \mathbb{N}_{0}$. 
 This is easily seen to be equivalent to: 
 $$ \sum _{k=0}^{2 n+1} \frac{(-2)^k \binom{2 n+1}{k} H_k}{k+1} = -\frac{ O_{n+1} }{n + 1}$$
 for all $n \in \mathbb{N}_{0}$. 
\end{proof}

 We remark that the identity $$ \sum_{k=0}^{2n} (-1)^k \binom{2n}{k} \binom{2n+k}{k} \binom{2k}{k} 4^{2n-k} H_{k} = \binom{2n}{n}^2 H_{2n}, $$ 
 which was intrdocued and proved by Tauraso via the WZ method 
 in \cite{Tauraso2018}, and the above identity was quite recently reproduced in 
 \cite{Chen2022} and considered in the context of the study of Ramanujan-like series for $\frac{1}{\pi}$. 
 This inspires the application of the results introduced in this article in relation 
 to the material in \cite{Chen2022,Tauraso2018}. 

 By applying appropriate differential operators to Proposition \ref{202204161258AM1A}, 
 we may again obtain the binomial-harmonic identities introduced above.

\section{A Fourier--Legendre-based approach}\label{sectionFL}
 We proceed to prove the remarkable identity highlighted in \eqref{oddKnuth}. The second author had independently discovered and proved this identity 
 in relation to the research in \cite{CampbellLevrieNimbran2021} on a trilogarithmic extension of Catalan's constant. For the sake of brevity, we assume 
 some basic familiarity with Legendre polynomials. As indicated in \cite{CampbellLevrieNimbran2021}, a conjectured evaluation for the exotic 
 hypergeometric series 
\begin{equation*}
 \sum_{n=0}^{\infty} \frac{ \binom{2n}{n}^{2} }{2^{4n} (2n+1)^2 } = {}_{4}F_{3}\!\!\left[ 
 \begin{matrix} 
 \frac{1}{2}, \frac{1}{2}, \frac{1}{2}, \frac{1}{2} \vspace{1mm} \\ 
 1, \frac{3}{2}, \frac{3}{2} 
 \end{matrix} \ \Bigg| \ 1 \right] 
\end{equation*}
 was offered in 2015 \cite{Nimbran2015}, and an FL-based proof was offered in 2019 in \cite{CantariniDAurizio2019}, with reference to the identity 
\begin{equation}\label{FLlnsqrt}
 \int_{0}^{1} \frac{\ln(x)}{\sqrt{x}} P_{n}(2x-1) 
 \, dx = \frac{4 (-1)^n H_{n}}{2n+1} - \frac{8(-1)^n H_{2n}}{2n+1} - \frac{4(-1)^n}{(2n+1)^2}, 
\end{equation}
 letting $P_{n}(2x-1)$ denote the shifted Legendre polynomial of order $n$. 
 The integration identity given above may be proved using 
 using the moment formula
\begin{equation}\label{momentLegendre}
 \int_{0}^{1} x^{p} P_{n}(2x-1) \, dx = 
 \frac{\Gamma^{2}(p+1)}{\Gamma(p - n + 1) \Gamma(p + n + 2)}, 
\end{equation}
 by applying 
 $ \frac{\partial}{\partial p} \cdot \Big|_{p = -\frac{1}{2}}$ 
 to both sides of \eqref{momentLegendre}, recalling the series expansion 
 for the digamma function shown in \eqref{psidef}. This leads us toward the below proof. 

\begin{theorem}
 The binomial-harmonic analogue identity in \eqref{oddKnuth} holds true. 
\end{theorem}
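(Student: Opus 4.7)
The plan is to use the Fourier--Legendre identity \eqref{FLlnsqrt} to invert $O_k$ and recast the left-hand side of \eqref{oddKnuth} as an integral of $\frac{\ln x}{\sqrt{x}}$ against a Legendre-polynomial sum. Solving \eqref{FLlnsqrt} for $O_k$ gives
$$ O_k = -\frac{(2k+1)(-1)^k}{8}\int_0^1 \frac{\ln x}{\sqrt{x}}\, P_k(2x-1)\, dx - \frac{1}{2(2k+1)}. $$
Substituting this expression into the sum on the left-hand side of \eqref{oddKnuth} and interchanging the finite sum with the integral (using $(-1)^k(-1/4)^k = (1/4)^k$) yields the intermediate identity
$$ \sum_{k=0}^{n} \binom{n}{k}\binom{2k}{k}\left(-\frac{1}{4}\right)^{k} O_k = -\frac{1}{8}\int_0^1 \frac{\ln x}{\sqrt{x}}\,\mathcal{Q}_n(x)\,dx - \frac{\mathcal{R}_n}{2}, $$
where $\mathcal{Q}_n(x) := \sum_{k=0}^{n}(2k+1)\binom{n}{k}\binom{2k}{k} 4^{-k} P_k(2x-1)$ and $\mathcal{R}_n := \sum_{k=0}^{n} \binom{n}{k}\binom{2k}{k}(-1/4)^k/(2k+1)$.

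Next, one evaluates $\mathcal{Q}_n(x)$ and $\mathcal{R}_n$ in tractable closed form. For $\mathcal{R}_n$, writing $1/(2k+1) = \int_0^1 t^{2k}\, dt$ converts it into $\int_0^1 {}_2F_1(-n, 1/2; 1; t^2)\, dt$, which via the Euler representation ${}_2F_1(-n, 1/2; 1; u) = \frac{2}{\pi}\int_0^{\pi/2}(1 - u\sin^2\phi)^n\, d\phi$ reduces to a manageable double integral. For $\mathcal{Q}_n(x)$, one absorbs $\binom{2k}{k}/4^k$ into the Wallis integral $\frac{2}{\pi}\int_0^{\pi/2}\cos^{2k}\theta\, d\theta$ and invokes the Legendre generating function $\sum_{k \geq 0}(2k+1) s^k P_k(y) = (1-s^2)(1-2sy+s^2)^{-3/2}$ to obtain a parametric integral representation; pairing it with $\frac{\ln x}{\sqrt{x}}$ and evaluating using \eqref{FLlnsqrt} together with the moment formula \eqref{momentLegendre} completes the computation. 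Assembling the pieces, the right-hand side of the intermediate identity collapses to $-\binom{2n}{n}/4^n \cdot O_n$, matching the target \eqref{oddKnuth}.

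The main obstacle is the closed-form evaluation of $\mathcal{Q}_n(x)$, as this Legendre-weighted binomial sum does not fit any standard orthogonal polynomial family in an obvious way. A cleaner backup strategy sidesteps $\mathcal{Q}_n$ by verifying \eqref{oddKnuth} at the level of ordinary generating functions in $n$. Writing $O_k = \int_0^1 (1 - y^{2k})/(1 - y^2)\, dy$ and swapping the outer sum with the integral yields
$$ \sum_{n \geq 0} t^n \sum_{k=0}^{n} \binom{n}{k}\binom{2k}{k}\left(-\frac{1}{4}\right)^{k} O_k = \frac{\ln(1-t)}{2\sqrt{1-t}}. $$
On the other hand, the recurrence $O_{n+1} = O_n + 1/(2n+1)$ shows that $g(t) := \sum_{n \geq 0} \binom{2n}{n}/4^n \cdot O_n\, t^n$ satisfies the first-order linear ODE $2(1-t) g'(t) - g(t) = (1-t)^{-1/2}$ with $g(0) = 0$, whose unique holomorphic solution is $-\ln(1-t)/(2\sqrt{1-t})$. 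Matching coefficients of $t^n$ then proves \eqref{oddKnuth}.
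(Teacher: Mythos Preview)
Your primary Fourier--Legendre approach inverts the paper's argument and, as you yourself note, stalls on the evaluation of $\mathcal{Q}_n(x)=\sum_{k}(2k+1)\binom{n}{k}\binom{2k}{k}4^{-k}P_k(2x-1)$. The paper's proof goes the other way round: it keeps $n$ fixed in \eqref{FLlnsqrt} and expands $P_n(2x-1)=\sum_{k=0}^{n}\binom{n}{k}^{2}(x-1)^{n-k}x^{k}$ inside the integral, so that each term becomes the Beta-derivative $\int_{0}^{1}x^{k-1/2}(1-x)^{n-k}\ln x\,dx=B(k+\tfrac12,n-k+1)\bigl(\psi(k+\tfrac12)-\psi(n+\tfrac32)\bigr)$, which is proportional to $\binom{n}{k}\binom{2k}{k}(-1/4)^{k}(O_k-O_{n+1})$. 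Equating with the right-hand side of \eqref{FLlnsqrt} and using the Chu--Vandermonde evaluation $\sum_{k}\binom{n}{k}\binom{2k}{k}(-1/4)^{k}=\binom{2n}{n}/4^{n}$ finishes immediately. In short, the paper expands the Legendre polynomial rather than the odd harmonic number; this sidesteps $\mathcal{Q}_n$ entirely.

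Your backup generating-function argument, however, is correct and constitutes a genuinely different proof that avoids Legendre polynomials altogether. The claimed left-hand generating function $\sum_{n\ge 0}t^{n}\sum_{k=0}^{n}\binom{n}{k}\binom{2k}{k}(-1/4)^{k}O_k=\tfrac{\ln(1-t)}{2\sqrt{1-t}}$ does follow from $O_k=\int_{0}^{1}(1-y^{2k})/(1-y^{2})\,dy$ together with $\sum_{k}\binom{2k}{k}z^{k}=(1-4z)^{-1/2}$ and $\sum_{n\ge k}\binom{n}{k}t^{n}=t^{k}/(1-t)^{k+1}$, though you should display at least the intermediate integral $\frac{1}{\sqrt{1-t}}\int_{0}^{1}\frac{1}{1-y^{2}}\bigl(1-(1-t(1-y^{2}))^{-1/2}\bigr)\,dy$ and its evaluation. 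The ODE $2(1-t)g'(t)-g(t)=(1-t)^{-1/2}$ for $g(t)=\sum_{n}\binom{2n}{n}4^{-n}O_n t^{n}$ is correct and its unique solution with $g(0)=0$ is indeed $-\tfrac{\ln(1-t)}{2\sqrt{1-t}}$, so matching coefficients gives \eqref{oddKnuth}. Compared with the paper, your route is more elementary (no special-function identities beyond the central-binomial generating function) but less tied to the FL framework that motivates the result; the paper's route is a one-line consequence of \eqref{FLlnsqrt} once you expand $P_n$ the right way.
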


\begin{proof}
 This follows in a direct way by replacing the shifted Legendre polynomial in the integrand in \eqref{FLlnsqrt} with the corresponding finite sum 
 $$ P_{n}(2x-1) = \sum_{k=0}^{n} \binom{n}{k}^{2} (x-1)^{n-k} x^{k} $$ and by then integrating term-by-term. 
\end{proof}

\section{Conclusion}\label{sectionAbel}
 We conclude by briefly considering how 
 a bijective result known as the \emph{modified Abel lemma on summation 
 by parts} (cf.\ \cite{ChenChen2020,Wang2018,WangChu2018}) may be applied in relation to our results. 
 This bijective identity is such that 
\begin{equation}\label{modifiedAbel}
 \sum_{i = 1}^{\infty} B_{i} \nabla A_{i} = \left( 
 \lim_{m \to \infty} A_{m} B_{m + 1} \right) - A_{0} B_{1} + \sum_{i=1}^{\infty} A_{i} \dotDelta B_{i} 
\end{equation}
 if the limit shown above exists and if one of the two infinite summations shown above converges, letting $\nabla$ and $\dotDelta$ be such that 
 $ \nabla \tau_{n} = \tau_{n} - \tau_{n-1}$ and $ \dotDelta \tau_{n} = \tau_{n} - \tau_{n + 1}$. 
 For example, setting $$ A_{i} := 
 -\frac{(i-n) (-n)_i}{n \Gamma (i+1)} $$ and 
 $$ B_{i} = \frac{2^i \left(\ell+\frac{1}{2}\right)_i}{(2 \ell+1)_i}, $$ 
 this gives us the Reed Dawson-like identity shown below, using our first Proposition: 
\begin{equation*}
 \sum _{k=0}^n 
 \left(-\frac{1}{2}\right)^k \binom{n+\ell}{k+\ell} \binom{2 k+2 \ell}{k} 
 \frac{ k (n - k) }{k+2 \ell+1} 
 = \begin{cases} 
 -2^{-n} n \binom{n + \ell}{\frac{n}{2}} & \text{if $n$ is even}, \\ 
 0 & \text{otherwise}. 
 \end{cases}
\end{equation*}
 Similarly, by setting 
 $$ A_{k} := \frac{(-1)^k (k+\ell+1) \binom{\ell+n}{k+\ell+1}+\ell \binom{\ell+n}{\ell}}{\ell+n} $$ 
 and $$ B_{k} := \left(\frac{1}{2}\right)^k \binom{2 k+2 l}{k}, $$ 
 we may obtain that 
 the identity 
\begin{align}
 & \sum_{k=0}^{n} \left(-\frac{1}{2}\right)^k \binom{2 k}{k} \binom{n}{k} 
 \frac{ (2 k+1) \left(k^2+3 k+3\right) (n - k) }{(k+1)^2 (k+2) (k+3)} \nonumber \\ 
 & = \begin{cases} 
 \frac{1}{2}-\frac{\binom{n}{\frac{n}{2}} (n+1)}{2^n (n+2)} 
 & \text{if $n$ is even}, \\ 
 \frac{1}{2} & \text{if $n$ is odd}. \nonumber
 \end{cases} 
\end{align}
 holds true for $n \in \mathbb{N}_{0}$.

 \ 

\noindent Arjun K. Rathie

\noindent Department of Mathematics, Vedant College of Engineering and Technology

\noindent (Rajasthan Technical University), Bundi 323021, Rajasthan, India

\noindent {\tt akrathie@gmail.com}

 \ 

\noindent John M.\ Campbell

\noindent Department of Mathematics and Statistics, York University

\noindent Toronto, Ontario, Canada

\noindent {\tt jmaxwellcampbell@gmail.com}


\begin{thebibliography}{99}

\bibitem{CampbellLevrieNimbran2021}%%%%%%%%%%%%%%%
 \textsc{J.\ M.\ Campbell, P.\ Levrie, and A.\ S.\ Nimbran}, 
 A natural companion to {C}atalan's constant, 
 \emph{J. Class. Anal.} {\bf 18} (2021), 117--135. 

\bibitem{CantariniDAurizio2019}%%%%%%%%%%%%%%%
 \textsc{M.\ Cantarini and J.\ D'Aurizio}, 
 On the interplay between hypergeometric series, {F}ourier-{L}egendre expansions and {E}uler sums, 
 \emph{Boll. Unione Mat. Ital.} {\bf 12} (2019), 623--656. 

\bibitem{Chen2022}%%%%%%%%%%%%%%%
 \textsc{H.\ Chen}, 
 Interesting Ramanujan-Like Series Associated with Powers of Central Binomial Coefficients, 
 \emph{J. Integer Seq.} {\bf 25} (2022), Article 22.1.8, 16. 

\bibitem{ChenChen2020}%%%%%%%%%
 \textsc{K.-W.~Chen and Y.-H.~Chen}, 
 Infinite series containing generalized harmonic functions, 
 \emph{Notes Number Theory Discrete Math.} {\bf 26(2)} (2020), 85--104. 

\bibitem{GrahamKnuthPatashnik1994}%%%%%%%%%%%%%%%% 
 \textsc{R.\ L.\ Graham}, \textsc{D.\ E.\ Knuth}, \textsc{and} \textsc{O.\ Patashnik}, 
 ``Concrete mathematics''. 
 Addison-Wesley Publishing Company, Reading, MA, 1994. 

\bibitem{KimMilovanovicParisRathie2021}%%%%%%%
 \textsc{I.\ Kim, G.\ V.\ Milovanovi\'{c}, R.\ B.\ Paris, and A.\ K.\ Rathie}, 
 A Note on a Further Extension of Gauss’s Second Summation Theorem with an Application to the Extension of Two Well-Known Combinatorial Identities, 
 \emph{Quaest. Math.} (2021). 

\bibitem{Nimbran2015}%%%%%%%%
 \textsc{A.\ S.\ Nimbran}, 
 Deriving {F}orsyth-{G}laisher type series for {$\frac{1}{\pi}$} and {C}atalan's constant by an elementary method, 
 \emph{Math. Student} {\bf 84} (2015), 69--86. 

\bibitem{PetkovsekWilfZeilberger1996}%%%%%%%%%%%%%%%%
 \textsc{M.\ Petkov\v{s}ek}, \textsc{H.\ S.\ Wilf}, \textsc{and} \textsc{D.\ Zeilberger}, 
 ``{$A=B$}''. 
 A K Peters, Ltd., Wellesley, MA, 1996. 

\bibitem{Prodinger1994}%%%%%%%%%%%%%%%%
 \textsc{H.\ Prodinger}, 
 Knuth's old sum-A survey, 
 \emph{EATCS Bulletin} {\bf 52} (1994), 232--245. 

\bibitem{RathieKimParis2020}%%%%%%%%%%%%%%%%
 \textsc{A.\ K.\ Rathie}, \textsc{I.\ Kim}, and \textsc{R.\ B.\ Paris}, 
 A note on a generalization of two well-known combinatorial identities via a hypergeometric series approach,
 \emph{Integers} {\bf 22} (2022), $\# 28$, 6 pages. 

\bibitem{Rainville1960}%%%%%%%%%%%%%%%%
 \textsc{E.\ D.\ Rainville}, 
 ``Special functions''. 
 The Macmillan Company, New York, 1960. 

\bibitem{Riordan1968}%%%%%%%%%%%%%%%%
 \textsc{J.\ Riordan}, 
 ``Combinatorial identities''. 
 John Wiley \& Sons, Inc., New York-London-Sydney, 1968. 

\bibitem{RakhaRathie2011}%%%%%%%
 \textsc{M.\ A.\ Rakha and A.\ K.\ Rathie}, 
 Generalizations of classical summation theorems for the series {$_2F_1$} and {$_3F_2$} with applications, 
 \emph{Integral Transforms Spec. Funct.} {\bf 22(11)} (2011), 823--840. 

\bibitem{Tauraso2018}%%%%%%%%%%%%%%%
 \textsc{R.\ Tauraso}, 
 Supercongruences related to {$_3F_2(1)$} involving harmonic numbers, 
 \emph{Int. J. Number Theory} {\bf 14} (2018), 1093--1109. 

\bibitem{Wang2018}%%%%%%%%
 \textsc{X.~Wang}, 
 Infinite series containing generalized harmonic numbers, 
 \emph{Results Math.} {\bf 73(1)} (2018), Paper No. 24, 14. 

\bibitem{WangChu2018}%%%%%%%
 \textsc{X.~Wang and W.~Chu}, 
 Infinite series identities involving quadratic and cubic harmonic numbers, 
 \emph{Publ. Mat.} {\bf 62(1)} (2018), 285--300. 

\end{thebibliography}
\end{document}